\definecolor{blue1}{RGB}{32,78,170}
\definecolor{blue2}{RGB}{93,92,160}
\definecolor{blue3}{RGB}{40,51,202}
\definecolor{blue4}{RGB}{0,0,0}
\definecolor{purple1}{RGB}{128,0,128}
\definecolor{El}{rgb}{.4,.9,1}
\normalfont\fontsize{12}{15}\bfseries}{\thesection}{1em}{}
\titleformat{\chapter}[display]
  {\normalfont\sffamily\huge\bfseries\color{blue4}}
  {\chaptertitlename\ \thechapter}{20pt}{\Huge}
\Large\color{blue4}}
\large\color{blue4}}
\newcommand{\ma}[1]{\emph{#1}}
            \newcounter{pulse}[section]
\numberwithin{pulse}{section}
\numberwithin{equation}{section}
\newtheorem{theorem}[pulse]{\bf \textsf{Theorem}}
\newtheorem{proposition}[pulse]{\bf \textsf{Proposition}}
\newtheorem{lem}[pulse]{\bf \textsf{Lemma}}
\newtheorem{cor}[pulse]{\bf \textsf{Corollary}}
\newtheorem{dummy-eg}[pulse]{\bf \textsf{Example}}
\newtheorem{dummy-rem}[pulse]{\bf \textsf{Remark}}
\newenvironment{eg}{\begin{dummy-eg}\upshape}{\end{dummy-eg}\ignorespacesafterend}
\newtheorem{dummy-def}[pulse]{\bf \textsf{Definition}}
\newenvironment{dfn}{\begin{dummy-def}\upshape}{\end{dummy-def}\ignorespacesafterend}
\newcommand{\supp}{\operatorname{supp}}
\newcommand{\norm}[1]{\Vert #1 \Vert}
\newcommand{\VN}{\operatorname{VN}}
\newcommand{\ignore}[1]{}
\title{Remarks on weak amenability of hypergroups}
\author{Mahmood Alaghmandan}
\email{mahmood.alaghmandan@carleton.ca}
\address{School of Mathematics and Statistics, Carleton University, Ottawa, ON, Canada H1S 5B6}
\keywords{Operator spaces; module Haagerup tensor product; duality; compact quantum groups.}
\subjclass[2010]{Primary 43A62, 43A25, 43A85; Secondary 46H20, 43A40, 43A35.}
\begin{document}

\maketitle

\vskip1em

\begin{center}
\large{{Dedicated to Rupert Lasser on the occasion of his 70th birthday.}}
\end{center}

\begin{abstract} 
We study the existence of multiplier (completely) bounded   approximate identities for the Fourier algebras of some classes of  hypergroups. In particular we show that, a large class of commutative hypergroups are weakly amenable with the Cowling-Haagerup constant $1$.  As a corollary, we  answer an open question of Eymard on Jacobi hypergroups. We also characterize the existence of  bounded approximate identities for the hypergroup Fourier algebras of ultraspherical hypergroups.
\end{abstract}

\section{Introduction}

In \cite{me}, Meaney studied the spectral synthesis properties of the Fourier algebras  of \emph{Jacobi hypergroups}. These are  hypergroup structures $H_{\alpha, \beta}$ on $\Bbb{R}^+$ defined by Jacobi functions where    $\alpha \geq \beta \geq -1/2$ but not $ \alpha = \beta = -1/2$.

Since for   some values of  $\alpha$ and $\beta$, the Jacobi hypergroups are isomorphic to some double coset structures on locally compact groups, they have been  of interest to harmonic analysts. For example for pairs $(\alpha = 2n -1, \beta = 1)$ and  $(\alpha = 7, \beta =3)$, $H_{\alpha, \beta}$ is isomorphic to some double coset hypergroups on $\operatorname{Sp}(n,1)$  and $F_{4(-20)}$ respectively. 
Pointing out the role of this double coset structure in studying the Cowling-Haagerup constants of $\operatorname{Sp}(n,1)$ and $F_{4(-20)}$ (\cite{mi}),  Eymard (\cite{eym-sur})  asks  for the analogues of the completely bounded multipliers for the Fourier algebra  and the Cowling-Haagerup constant for  $H_{\alpha, \beta}$. He was in particular interested to know, if for $H_{\alpha, \beta}$, such a constant would change by parameters $\alpha, \beta$.

The spaces of bounded multipliers and then later  completely bounded multipliers of hypergroup Fourier algebras  have been studied  in \cite{ac2,mu1,mu2}. The notions of weak amenability and subsequently the Cowling-Haagerup constant of a hypergroup were introduced  in \cite{ac2} where Crann and the author showed that  for every discrete commutative hypergroup, this constant is $1$. This result is not enough to answer Eymard's question, as commutative hypergroups $H_{\alpha,\beta}$ are not discrete. 
In this short paper, we answer Eymard's question  (Corollary~\ref{c:Jacobi-WA}) by  showing that the Cowling-Haagerup constant of $H_{\alpha, \beta}$ is   $1$ for all values of  $\alpha, \beta$.  

The paper is organized as follows. 
Section~\ref{s:notation} briefly introduces our notation. 
In Section~\ref{s:P-l}, we study the notion of weak amenability for a larger class of hypergroups so-called  P$_\lambda$-hypergroups. These hypergroups behave well with respect to multiplication of their reduced positive definite functions. 
In this Section, we use this feature of these hypergroups as well as a careful application of a study by Voit (\cite{voit}) on commutative hypergroups to prove that every commutative P$_\lambda$-hypergroup is weakly amenable with constant $1$. 
In Section~\ref{s:ultra}, we focus our study on  ultraspherical hypergroups. This class of hypergroups covers double cosets hypergroups as a  particular subclass and therefore it has been of interest in studying locally compact groups. We conclude this section with a complete Leptin theorem for this class of hypergroups which characterizes the existence of bounded approximate identities  for the Fourier algebras of ultraspherical hypergroups.

\section{Notation}\label{s:notation}

Here $H$ is a hypergroup with $(M(H), \cdot)$ the algebra of all bounded measures on $H$. 
 All hypergroups considered in this article are assumed to possess a left invariant Haar measure $\lambda$, and we use $\int_H dx$ to denote the integration with respect to $\lambda$ if there is no risk of confusion.
 
 We let $(L^p(H), \norm{\cdot}_p)$, $1 \leq p \leq  \infty$, denote the usual $L^p$-space with respect to a fixed left Haar measure $\lambda$. For each $f \in L^1(H)$ and $g \in L^p(H)$ for $1 \leq p \leq \infty$, it follows that
 \[
 f \cdot_\lambda g (x):=\int_H  f( y)g(\tilde{y} \cdot x )dy
 \]
 belongs to $L^p(H)$ and $\norm{f \cdot_\lambda  g}_p \leq  \norm{f}_1 \norm{g}_p$.
 In particular,  $L^1(H)$ is a Banach $\ast$-algebra with this action and  the involution  $f^*(x)=\Delta(\tilde{x})f(\tilde{x})$ where $\Delta$ is the modular function with respect to the left Haar measure $\lambda$.

 A hypergroup $H$ is called \ma{commutative} if $\nu_1 \cdot \nu_2= \nu_2 \cdot \nu_1$ for every pair $\nu_1,\nu_2 \in M(H)$.
We define $\widehat{H}:=\{ \chi \in C_b(H): \chi(x \cdot y) = \chi(x) \chi(y)\ \text{and}\ \chi(\tilde{x})= \overline{\chi(x)}\}$,
equipped with the topology of uniform convergence on compact subsets of  $H$. One can  define Fourier and Fourier-Stieltjes transforms on $L^1(H)$ and $M(H)$ respectively, similar to the group case. In particular, there is a positive measure $\varpi$ on $\widehat{H}$, called the \ma{Plancherel measure}, such that the Fourier transform can be extended to a isometric isomorphism from  $L^2(H)$ onto $L^2(\supp(\varpi), \varpi)$.  Unlike the group case, $\widehat{H}$ does not necessarily form a hypergroup and $\supp(\varpi)$ may be a proper subset of $\widehat{H}$.

Abusing notation, we  let $\lambda$ also denote the left regular representation of $H$ on $L^2(H)$ given by
$\lambda(x)\xi(y)=\xi (\tilde{x}\cdot y)$ for all $x,y\in H$ and for all $\xi \in L^2(H)$. This can be extended to $L^1(H)$ where $\lambda(f)\xi:=f \cdot_\lambda \xi$ for $f\in L^1(H)$ and $\xi \in L^2(H)$.
Let $C^*_\lambda(H)$ denote the  completion of $\lambda(L^1(H))$ in $\mathcal{B}(L^2(H))$ which is called the \ma{reduced $C^*$-algebra} of $H$. The von Neumann algebra generated by $\{\lambda(x): x \in H\}$ is called the \ma{von Neumannn algebra} of $H$, and is denoted by $\VN(H)$.

Let $P_\lambda(H)$ denote the set of all bounded, continuous positive definite functionals on $C^*_\lambda(H)$.  By identifying $P_\lambda(H)$ as functions on $H$, one can span $P_\lambda(H)$ to generate $B_\lambda(H)$, the reduced Fourier-Stieltjes space of $H$. On can show that $B_\lambda(H)$ is the dual space of $C^*_\lambda(H)$. The Fourier space, $A(H)$, is the closed subspace of $B_\lambda(H)$ which is spanned by compactly supported elements in $P_\lambda(H)$ and is  isomorphic to the predual of $VN(H)$.  Note that in general, $A(H)$ is not a Banach algebra. If this is the case, $H$ is called a {\em regular Fourier hypergroup}. 

As the predual of $VN(H)$, $A(H)$ has a canonical operator space structure.
A hypergroup $H$ is called   a \emph{completely Fourier hypergroup} if $A(H)$, furnished with its canonical operator space structure,  is a completely contractive Banach algebra  (look at \cite[Section~3]{ac2}). 
 Based on this operator space structure, the space of completely bounded multipliers of  $A(H)$, denoted by $M_{cb}A(H)$,  for a completely Fourier hypergroup $H$ is defined. 
 It can be shown that  a commutative regular Fourier  hypergroup $H$ is always a completely Fourier hypergroup and  the multiplier algebra of $A(H)$, denoted by $MA(H)$, is equal to  $M_{cb}A(H)$. 
 
 A completely Fourier hypergroup $H$ is called \emph{weakly amenable} with the (Cowling-Haagerup) constant $C$ is there is an approximate identity $(u_\alpha)$ for $A(H)$ so that $\sup_\alpha \| u_\alpha\|_{M_{cb}A(H)} \leq C$.

 For more on  hypergroup Fourier and  reduced Fourier-Stieltjes spaces, in particular on  commutative and ultraspherical hypergroups, we refer the reader to \cite{ma5, ac2, mu1, mu2}.

 \section{P$_\lambda$-hypergroups and their Leptin theorem}\label{s:P-l}

\begin{dfn}\label{d:P-hypergroups}
We call a hypergroup $H$   a \emph{P$_{\lambda}$-hypergroup} if $P_\lambda(H)$   is closed under pointwise multiplication.
\end{dfn}

Based on an argument in \cite[Corollary~4.13]{mu1}, one can observe that for a  commutative P$_\lambda$-hypergroup $H$, $A(H)$ and $B_\lambda(H)$  are forming Banach algebras with  their own norms. Formerly, this was proved in \cite[Theorem~2.2]{am}  where this feature was studied for hypergroups $H$ where $P(H)$, the set of all bounded, continuous positive definite functions, is closed under pointwise multiplication.  The later proof could be easily modified for P$_\lambda$-hypergroup. Therefore, every P$_\lambda$-hypergroup is a regular Fourier hypergroup. 

 The following proposition helps us to find more examples of commutative P$_\lambda$-hypergroups.  

\begin{proposition}\label{p:P-lambda-characters}
Let $H$ be a commutative hypergroup so that for every pair $\chi, \psi$ in $\supp(\varpi)$, $\chi \psi$ belongs to $P_\lambda(H)$. Then $H$ is a $P_\lambda$-hypergroup.
\end{proposition}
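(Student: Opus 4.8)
The plan is to reduce the statement to the Bochner-type representation of reduced positive-definite functions on a commutative hypergroup and then to exploit the hypothesis on products of characters.

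First I would record the structural input from the theory of commutative hypergroups (this is where Voit's analysis enters): since $H$ is commutative, $C^*_\lambda(H)$ is a commutative $C^*$-algebra whose Gelfand spectrum is exactly $\supp(\varpi)$, so the Fourier transform identifies it with $C_0(\supp(\varpi))$. Consequently the positive cone $P_\lambda(H)$ is in bijection with the finite positive Radon measures on $\supp(\varpi)$: every $\phi \in P_\lambda(H)$ admits a representation
$$
\phi(x) = \int_{\supp(\varpi)} \chi(x)\, d\mu_\phi(\chi), \qquad x \in H,
$$
for a unique finite positive measure $\mu_\phi$ supported on $\supp(\varpi)$. The point to emphasize is that the representing measure lives on $\supp(\varpi)$ and not on all of $\widehat{H}$; this is precisely the feature that distinguishes $P_\lambda(H)$ from the full cone $P(H)$.

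Next, given $\phi_1, \phi_2 \in P_\lambda(H)$ with representing measures $\mu_1, \mu_2$, I would compute the pointwise product by Fubini's theorem as
$$
\phi_1(x)\phi_2(x) = \int_{\supp(\varpi)}\int_{\supp(\varpi)} \chi(x)\psi(x)\, d\mu_1(\chi)\, d\mu_2(\psi).
$$
Since the characters in $\supp(\varpi)$ are positive definite, one has $\norm{\chi}_\infty \leq 1$, so all the measures in sight are finite and the interchange is legitimate. By hypothesis each integrand $\chi\psi$ lies in the cone $P_\lambda(H)$, so $\phi_1\phi_2$ is exhibited as an integral of elements of $P_\lambda(H)$ against the positive product measure $\mu_1 \otimes \mu_2$ on $\supp(\varpi) \times \supp(\varpi)$.

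Finally I would invoke closedness of $P_\lambda(H)$ under such integration. Viewing $P_\lambda(H)$ as the positive cone in the dual of $C^*_\lambda(H)$, it is a weak*-closed convex cone; a bounded, weak*-measurable family of its members, integrated against a finite positive measure, therefore defines again a positive functional on $C^*_\lambda(H)$, that is, an element of $P_\lambda(H)$. Applying this to the product measure gives $\phi_1\phi_2 \in P_\lambda(H)$, which is the required closure under pointwise multiplication. The main obstacle is twofold: securing the Bochner representation with representing measure concentrated on $\supp(\varpi)$ (the substantive harmonic-analytic input), and verifying the measurability and integrability needed so that positivity is preserved in passing from the pointwise integrand to the integrated functional. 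The uniform bound $\norm{\chi}_\infty \leq 1$ is what makes the latter routine once the former is in place.
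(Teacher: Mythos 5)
Your proposal is correct and takes essentially the same route as the paper: both arguments rest on the identification $C^*_\lambda(H)\cong C_0(\supp(\varpi))$ and $B_\lambda(H)\cong M(\supp(\varpi))$, under which elements of $P_\lambda(H)$ correspond to positive measures, followed by a Fubini expansion of the product as a double integral of character products over $\supp(\varpi)\times\supp(\varpi)$, with the hypothesis supplying positivity of the integrand and positivity of the two measures giving positivity of the product functional. The only difference is bookkeeping: the paper carries out the double-integral computation directly on the dual pairing $\langle f, uv\rangle$ via the module action $u\cdot f$, whereas you compute pointwise and transfer back to the functional level by weak$^*$ integration, which is the routine verification you correctly flag.
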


\begin{proof}
Let $S:=\supp(\varpi)$.  It is known that $C^*_\lambda(H)$ is isometrically isomorphic to $C_0(S)$ and $B_\lambda(H)$ is isometrically isomorphic to $M(S)$ through the Fourier Stieltjes transform (\cite{mu1}).  Recall that $B_\lambda(H)$ is a dual Banach space i.e. for each $u \in B_\lambda(H)$ and $f \in C^*_\lambda(H)$, $u \cdot f \in C^*_\lambda(H)$ where  $\langle u \cdot f, w\rangle := \langle f, uw\rangle$ ($w \in B_\lambda(H)$). 
 
 Let $f \in C^*_\lambda(H)$ and $u$ belong to $B_\lambda(H)$. Let $\mu \in M(S)$ be the Fourier Stieltjes transform of $u$.  Hence, for each $\psi \in S$ we have
 \[
 \langle u \cdot f, \psi\rangle = \langle \psi \cdot f, u\rangle = \int_S \langle \psi \cdot f, \bar{\phi}\rangle d\mu(\phi) = 
  \int_S \langle \bar{\phi} \cdot f,  \psi\rangle d\mu(\phi) =  \left\langle \int_S \bar{\phi} \cdot f d\mu(\phi), \psi\right\rangle.
 \]
Now let $u, v \in P_\lambda(G)$ and $\mu, \nu \in M(S)^+$  where $\mu$ and $\nu$ are the  Fourier Stieltjs  transforms of $u$ and $v$ respectively.  Then for each positive element $f \in C^*_\lambda(H)^+$we get
 \begin{eqnarray*}
 \langle f, uv\rangle = \langle u \cdot f, v \rangle =  \langle \int_S  \bar{\phi} \cdot f d\mu(\phi), v\rangle
 = \int_S \int_S \langle \bar{\phi} \cdot f, \bar{\psi}\rangle  d\nu(\psi) d\mu(\phi)
 = \int_S \int_S \langle f, \bar{\psi}\;  \bar{\phi}\rangle   d\nu(\psi) d\mu(\phi).
 \end{eqnarray*}
 But by the assumption of the proposition, $ \langle f, \bar{\psi}\;  \bar{\phi}\rangle\geq 0$ for every pair $\phi, \psi \in S$.  Now the positivity of the measures $\mu, \nu$ implies that $\langle f, uv\rangle \geq 0$ which finishes the proof.
\end{proof}

\begin{eg}\label{eg:Jacobi-hypergroups}
Let $H_{\alpha, \beta}$ denote the Jacobi hypergroup corresponded to parameters $\alpha, \beta$. 
It was proved   in \cite[Section 4]{jac} that, for every pair $\phi, \psi$ in the support of the Plancherel measure, the function $\phi\psi$ belongs to $P_\lambda(H)$. 
Hence by Proposition~\ref{p:P-lambda-characters},  $H_{\alpha, \beta}$ is a P$_\lambda$-hypergroup.  
\end{eg} 

In the following, we apply a configuration for commutative hypergroups which is due to Voit. In \cite{voit}, he proves that for every commutative hypergroup $H$, there is a unique positive character $\chi_0$ in the support of the Plancherel measure satisfying $\sup_{\chi \in \supp(\varpi)}  |\chi(x)| \leq \chi_0(x)$
for every $ x \in H$.  He then defines a new hypergroup convolution on $H$, denoted here by $\circ$, via
\[
  x  \circ y := \frac{ \chi_0}{\chi_0( x \cdot y)} x \cdot y
\]
for every pair $x,y \in H$. We use $H_0$ to denote the later hypergroup structure on $H$ whose Haar measure is $\lambda'=\chi_0^2 \lambda$. In this case, it follows that $\widehat{H}_0$ is homeomorphic to the set $\{ \chi \in \widehat{H}: |\chi(x)| \leq \chi_0(x)\ \ \forall x\in H\}$ through the mapping $\chi \mapsto \chi/\chi_0$.  

 In \cite{ac2}, it was shown that $u \mapsto u/\chi_0$ forms an isometry from $A(H)$ onto $A(H_0)$ as Banach spaces. In fact, it was observed that $A(H_0)$ is isometric to the closure of $A(H)$ in its multiplier norm.  In the following we study $P_\lambda(H_0)$ for a commutative P$_\lambda$-hypergroup.

\begin{proposition}\label{p:P-lambda(H0)}
Let $H$ be a commutative  hypergroup with the positive character $\chi_0$ in the support of the Plancherel measure. Then $v\chi_0 \in P_\lambda(H)$ if and only if $v \in P_\lambda(H_0)$. Moreover, if $H$ is  a  P$_\lambda$-hypergroup, then
for each pair $v, u \in P_\lambda(H_0)$,  both of $\chi_0 uv$ and $u\chi_0$ belong to $P_\lambda(H_0)$ and  $P_\lambda(H) \subseteq P_\lambda(H_0)$.
\end{proposition}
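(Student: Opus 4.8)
The plan is to treat the three assertions in order, leaning on the correspondence between $H$ and the Voit rescaling $H_0$. The central tool is the identification $\widehat{H}_0 \cong \{\chi \in \widehat{H} : |\chi(x)| \leq \chi_0(x)\ \forall x\}$ via $\chi \mapsto \chi/\chi_0$, which tells us how characters, and hence positive definite functions, transform between the two hypergroup structures. Since positive definite functions in $P_\lambda$ are (weak-$*$ closures of) combinations of characters in $\supp(\varpi)$, multiplication by $\chi_0^{\pm 1}$ should carry $P_\lambda(H_0)$ and $P_\lambda(H)$ into one another in a controlled way.

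First I would establish the biconditional $v\chi_0 \in P_\lambda(H) \iff v \in P_\lambda(H_0)$. The natural route is to unwind what positive definiteness on $C^*_\lambda$ means in each picture. Using the Fourier--Stieltjes identification $B_\lambda(H) \cong M(\supp\varpi)$ from the previous proposition's proof, an element lies in $P_\lambda$ exactly when its representing measure is positive. The homeomorphism $\chi \mapsto \chi/\chi_0$ between $\supp(\varpi_0)$ and the corresponding subset of $\widehat H$ should convert a positive measure representing $v$ on $\widehat{H}_0$ into a positive measure representing $v\chi_0$ on $\supp(\varpi)$, and conversely; the rescaling of the Haar measure by $\chi_0^2$ is what makes the two Plancherel pictures compatible. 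I expect this to reduce to checking that the map respects positivity, which follows because $\chi_0 > 0$.

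Next, assuming $H$ is a $P_\lambda$-hypergroup, I would derive that $\chi_0 u v$ and $u\chi_0$ lie in $P_\lambda(H_0)$ for $u,v \in P_\lambda(H_0)$. By the biconditional just proved, $u,v \in P_\lambda(H_0)$ give $u\chi_0, v\chi_0 \in P_\lambda(H)$. Since $H$ is a $P_\lambda$-hypergroup, $(u\chi_0)(v\chi_0) = \chi_0^2 uv \in P_\lambda(H)$. Applying the biconditional in reverse (with $\chi_0 uv$ in the role of $v$) yields $\chi_0 uv \in P_\lambda(H_0)$, since $(\chi_0 uv)\chi_0 = \chi_0^2 uv \in P_\lambda(H)$. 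For $u\chi_0 \in P_\lambda(H_0)$, I would note $u\chi_0 \cdot \chi_0 = u\chi_0^2$; but a cleaner approach is to observe that the constant character $\mathbf{1}$ lies in $P_\lambda(H_0)$ (it is $\chi_0/\chi_0$, the image of $\chi_0 \in \supp\varpi$), so taking $v = \mathbf{1}$ in the product statement gives $\chi_0 u \in P_\lambda(H_0)$ directly.

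Finally, the inclusion $P_\lambda(H) \subseteq P_\lambda(H_0)$ should follow from the preceding pieces: for $w \in P_\lambda(H)$, the biconditional gives $w/\chi_0 \in P_\lambda(H_0)$, and then multiplying by $\chi_0$ using the $u\chi_0 \in P_\lambda(H_0)$ assertion returns $w \in P_\lambda(H_0)$. The main obstacle I anticipate is the first step: making the measure-theoretic transport of positive definiteness under $\chi \mapsto \chi/\chi_0$ fully rigorous, in particular confirming that the two Fourier--Stieltjes identifications intertwine correctly with the $\chi_0^2$-rescaling of the Haar measure, rather than merely matching supports. Once that correspondence is nailed down, the remaining assertions are formal consequences.
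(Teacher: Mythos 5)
Your handling of the second and third assertions is exactly the paper's argument: from $u,v\in P_\lambda(H_0)$ you pass to $\chi_0 u,\chi_0 v\in P_\lambda(H)$, use the P$_\lambda$-hypothesis to get $\chi_0^2uv\in P_\lambda(H)$, and apply the biconditional backwards; the constant character $\mathbf{1}\in P_\lambda(H_0)$ with $v=\mathbf{1}$ gives $\chi_0 u\in P_\lambda(H_0)$; and the factorization $w=(w/\chi_0)\chi_0$ gives the inclusion $P_\lambda(H)\subseteq P_\lambda(H_0)$ --- all identical to the paper. Where you genuinely diverge is the central biconditional $v\chi_0\in P_\lambda(H)\Leftrightarrow v\in P_\lambda(H_0)$. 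The paper proves it with test functions: it cites \cite{ac2} both for $v\in B_\lambda(H_0)\Leftrightarrow \chi_0 v\in B_\lambda(H)$ and for the convolution identity $f\circ_{\lambda'}f^*=\bigl((\chi_0 f)\cdot_\lambda(\chi_0 f)^*\bigr)/\chi_0$, then uses $d\lambda'=\chi_0^2\,d\lambda$ to compute $\langle v, f\circ_{\lambda'}f^*\rangle=\langle \chi_0 v,(\chi_0 f)\cdot_\lambda(\chi_0 f)^*\rangle$ and concludes because $f\mapsto\chi_0 f$ is a bijection of $C_c(H)$. Your route instead works in the Gelfand/Bochner picture, identifying $P_\lambda$ with positive measures on the Plancherel support and transporting measures under $\chi\mapsto\chi/\chi_0$. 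This is viable, but be precise about what it requires: not merely the homeomorphism $\widehat{H}_0\cong\{\chi\in\widehat H:|\chi|\leq\chi_0\}$ quoted in Section~3, but the stronger fact that this map carries $\supp(\varpi)$ \emph{onto} $\supp(\varpi_0)$ (equivalently, $\varpi_0$ is the image measure of $\varpi$, which follows from the intertwining $\widehat{f}^{\,H_0}(\chi/\chi_0)=\widehat{\chi_0 f}^{\,H}(\chi)$ and uniqueness of the Plancherel measure); both inclusions of supports are needed, one for each direction of the biconditional, together with uniqueness of Fourier--Stieltjes representing measures. That input is available in \cite{voit}, and you correctly flagged it as the main obstacle, so your plan closes. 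The trade-off: the paper's computation is more elementary and self-contained, avoiding any statement about Plancherel supports, whereas your argument is conceptually cleaner in the commutative setting and runs exactly parallel to the proof of Proposition~\ref{p:P-lambda-characters}, at the cost of importing a sharper theorem of Voit.
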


\begin{proof}
First note that by \cite[Lemma~3.8]{ac2}, $v\in B_\lambda(H_0)$ if and only if $\chi_0v \in B_\lambda(H)$. To show that  $P_\lambda(H_0)$, it is enough to show that for each $f \in C_c(H)$, $\langle u, f \circ_{\lambda'} f^*\rangle \geq 0$. The very same holds  for $\chi_0 v$, that is $\chi_0 v \in P_\lambda(H)$ if $\langle u, f \cdot_{\lambda} f^*\rangle \geq 0$. But by the computations in the proof of \cite[Lemma~3.8]{ac2}, we know that $f \circ_{\lambda'} f^* = \left((\chi_0 f) \cdot_\lambda (\chi_0 f)^* \right)/{\chi_0}$ for every  $f \in C_c(H)$.
Therefore, we get
\begin{eqnarray*}
\langle v, f \circ_{\lambda'} f^*\rangle  &=&  \int_H v(x) f \circ_{\lambda'} f^*(x) d\lambda'(x)\\
&=& \int_H (\chi_0v)(x)  (\chi_0 f) \cdot_{\lambda} (\chi_0 f)^*(x)   d\lambda(x)= \langle \chi_0 v, (\chi_0 f) \cdot_{\lambda} (\chi_0 f)^*\rangle.
\end{eqnarray*}
Since $f \mapsto \chi_0 f$ is a Banach space isomorphism from $C_c(H)$ onto itself, we have that $v \in P_\lambda(H_0)$ if and only $v\chi_0 \in P_\lambda(H)$. 

 Let $v, u \in P_\lambda(H_0)$. By what we proved above, we know that $\chi_0 v, \chi_0 u \in P_\lambda(H)$. Since $H$ is a P$_\lambda$-hypergroup, $\chi_0^2 vu \in P_\lambda(H)$. Again by the argument above, we get $\chi_0vu \in P_\lambda(H_0)$. 
 Note that   the constant character $1$ belongs to $P_\lambda(H_0)$. So in particular, for each $u \in P_\lambda(H_0)$, $\chi_01 u = \chi_0 u$ belongs to $P_\lambda(H_0)$.
Eventually, let $w \in P_\lambda(H)$. Then there exists  $v \in P_\lambda(H_0)$ so that $\chi_0 v=w$. But  $\chi_0 v \in P_\lambda(H_0)$.    
\end{proof}

 The following result is an analogue of \cite[Theorem~3.7]{ac2} for P$_\lambda$-hypergroups. The main idea of the proof is inspired by the proof of the Nielson lemma presented in \cite[Proposition~5.1]{haa} with an application of  Voit's theory for commutative hypergroups. 

\begin{theorem}\label{t:P-voit}
Let $H$ be a commutative   P$_\lambda$-hypergroup. Then $H$ is weakly amenable with constant~$1$.
\end{theorem}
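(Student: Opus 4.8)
The plan is to prove weak amenability with constant $1$ by producing an approximate identity $(u_\gamma)$ for $A(H)$ whose multiplier norms are bounded by $1$. Since $\|u\|_{M_{cb}A(H)} = \|u\|_{MA(H)}$ for commutative regular Fourier hypergroups, and since the multiplier norm of an element of $A(H)$ that sits in $P_\lambda(H)$ is controlled by its behaviour as a positive definite multiplier, the natural strategy is to build the $u_\gamma$ out of products of reduced positive definite functions, exploiting precisely the $P_\lambda$-hypothesis that $P_\lambda(H)$ is closed under multiplication. The key technical input is the isometric identification, recalled just before the statement, of $A(H_0)$ with the closure of $A(H)$ in its multiplier norm via $u \mapsto u/\chi_0$, together with Proposition~\ref{p:P-lambda(H0)}, which tells us that $P_\lambda(H) \subseteq P_\lambda(H_0)$ and that $P_\lambda(H_0)$ is stable under multiplication by $\chi_0$ and under products. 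This lets me transport the problem to $H_0$, where the constant character $1$ is available and where positive definite functions behave well.

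First I would reduce, following the Haagerup--Nielsen argument cited as \cite[Proposition~5.1]{haa}, to the task of finding for each compact $K \subseteq H$ and each $\varepsilon > 0$ a function $u \in A(H)$ with $u \equiv 1$ (approximately) on $K$ and $\|u\|_{MA(H)} \leq 1 + \varepsilon$. The Nielsen-type lemma produces such multipliers as suitable averages or products of normalized coefficient functions; the role of Voit's deformation $H_0$ is that in $H_0$ the trivial character $1$ lies in $\supp(\varpi_0)$, so constant-one behaviour can be approximated by genuine elements of $A(H_0)$ built from compactly supported pieces of $P_\lambda(H_0)$. Concretely, I would take a compactly supported $\phi \in P_\lambda(H_0)$ with $\phi(e)=1$ approximating $1$ uniformly on compacta, and use the $P_\lambda$-stability to guarantee that the relevant products $\chi_0\, \phi \bar\phi$ (or $\phi \bar\phi$ on $H_0$) remain in $P_\lambda$, so that the candidate multipliers are positive definite and hence have multiplier norm equal to their value at the identity, namely $1$.

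The main steps, in order, would be: (1) translate the weak amenability statement into the approximation-of-unity form via the isometry $A(H)\cong A(H_0)$ in the multiplier norm; (2) invoke Voit's theorem to pass to $H_0$ and use that $1\in\supp(\varpi_0)$ so that $A(H_0)$ admits compactly supported approximants of the constant function; (3) apply the $P_\lambda$-closure, via Proposition~\ref{p:P-lambda(H0)}, to ensure each approximant can be written as an element of $P_\lambda(H_0)$ normalized to value $1$ at $e$, forcing its $M_{cb}A(H_0)$-norm down to exactly $1$; and (4) transport the resulting bounded approximate identity back to $A(H)$, checking that the multiplier norms are preserved (or only cost a factor tending to $1$) under $u\mapsto \chi_0 u$. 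The positivity computation at the end of Proposition~\ref{p:P-lambda(H0)}, showing $\chi_0 u v \in P_\lambda(H_0)$, is exactly what keeps products of approximants inside the unit-norm cone.

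The hard part will be step (2)--(3): guaranteeing that the deformed hypergroup $H_0$ genuinely supplies compactly supported positive definite functions approximating $1$ uniformly on compacta, i.e.\ that $A(H_0)$ has an approximate identity bounded by $1$ in \emph{its own} norm, not merely the multiplier norm. This is where Voit's structure theory must be used carefully: the normalization by $\chi_0$ is designed precisely so that the Plancherel measure of $H_0$ charges a neighbourhood of the trivial character, and I expect the delicate point is controlling the interchange of the approximation on $H_0$ with the $P_\lambda$-products so that no multiplier norm exceeds $1$. Once the positive definiteness of the approximants is secured through Proposition~\ref{p:P-lambda(H0)}, the norm bound $\|u_\gamma\|_{M_{cb}A(H)} = u_\gamma(e) \to 1$ follows from the general fact that a positive definite multiplier attains its completely bounded norm at the identity, completing the argument.
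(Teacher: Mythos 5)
Your proposal follows the same skeleton as the paper's proof: pass to Voit's deformed hypergroup $H_0$, use Proposition~\ref{p:P-lambda(H0)} to keep products of reduced positive definite functions positive definite, take a net of compactly supported elements of $P_\lambda(H_0)$ normalized at $e$ converging to $1$ uniformly on compacta, and get the multiplier bound $1$ from the isometric identification of $A(H_0)$ with the closure of $A(H)$ in the multiplier norm from \cite{ac2}. However, there is a genuine gap at the analytic core. You reduce the theorem to producing, for each compact $K$ and $\varepsilon>0$, a function $u\in A(H)$ with $u\approx 1$ on $K$ and $\norm{u}_{MA(H)}\leq 1+\varepsilon$, and you treat this reduction as a known Haagerup--Nielsen step. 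But a net of multiplier-norm-one functions converging to $1$ only \emph{uniformly on compacta} is not yet an approximate identity for $A(H)$: uniform closeness of $u_\gamma$ to $1$ on $\supp(w)$ gives no control on $\norm{u_\gamma w - w}_{A(H)}$ (or even on $\norm{u_\gamma w - w}_{MA(H)}$). Upgrading uniform convergence to norm convergence is precisely the content of the paper's central argument, and it is where the $P_\lambda$ hypothesis does its real work: since $u_\alpha w$ stays in $P_\lambda(H_0)$, one can factorize $w = h\circ_{\lambda'}h^*$ and $u_\alpha w = h_\alpha\circ_{\lambda'}h_\alpha^*$ with $h,h_\alpha\in L^2(H_0)$, show $\lambda'(u_\alpha w)\to\lambda'(w)$ in operator norm, take square roots by polynomial approximation of $t\mapsto\sqrt t$, deduce $h_\alpha\to h$ in $L^2(H_0)$ (weak convergence plus convergence of norms), and conclude $\norm{u_\alpha w - w}_{A(H_0)}\to 0$; only then does one transfer to the $MA(H)$-norm via \cite[Remark~3.12]{ac2} and finally to the $A(H)$-norm using the factorization $u'u=u$ of \cite[Lemma~3.4]{ma}. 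You correctly flag this as ``the hard part,'' but you propose the wrong tool for it: Voit's structure theory supplies the net (via the $(P_2)$ property of $H_0$ and \cite[Lemma~4.4]{sk}); it does not supply the norm convergence.

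A secondary, smaller issue: you invoke ``the general fact that a positive definite multiplier attains its completely bounded norm at the identity.'' For locally compact groups the contractive inclusion $B(G)\subseteq M_{cb}A(G)$ rests on representation-theoretic facts (Fell absorption) that have no analogue for general hypergroups; indeed the failure of $P_\lambda(H)$ to be closed under products is exactly the obstruction, which is why the theorem carries the $P_\lambda$ hypothesis. In the paper the bound $\norm{u_\alpha}_{MA(H)}\leq u_\alpha(e)=1$ is not a general fact but a consequence of the isometry between $A(H_0)$ and the multiplier-norm closure of $A(H)$, so your step (3) also needs to be routed through that identification rather than through a claimed general principle.
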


\begin{proof}
 Recall that a hypergroup is said to satisfy $(P_2)$ if there exists a Reiter's net in $L^2(H)$, see \cite{sk}.
Since $H_0$ satisfies $(P_2)$, there is a net $(u_\alpha)$ in $P_\lambda(H_0) \cap C_c(H)$ so that $u_\alpha \rightarrow 1$ uniformly on compact sets of $H$ and $\norm{u_\alpha}_{A(H_0)}=u_\alpha(e)=1$ for each $\alpha$, \cite[Lemma~4.4]{sk}. 
Let $w \in P_\lambda(H) \cap C_c(H)$. By Proposition~\ref{p:P-lambda(H0)}, $w \in P_\lambda(H_0)\cap C_c(H)$.
Define $w_\alpha =   u_\alpha w$  for each $\alpha$. Since $w_\alpha= \chi_0 vu_\alpha$ for some $v \in P_\lambda(H_0)$,  $(w_\alpha)$ is a subset of  $P_\lambda(H_0) \cap C_c(H)$ so that for each $\alpha$, $\supp(w_\alpha) \subseteq \supp(w)$.  Since $u_\alpha \rightarrow 1$ uniformly on compact sets, 
\[
\lim_\alpha \norm{ w_\alpha - w}_\infty =0.
\]

Since $w \in P_\lambda(H_0) \cap C_c(H) \subseteq A(H_0)$ is a positive form on $C^*_\lambda(H_0)$, there is some $h \in L^2(H_0)$ so that $w = h \circ_{\lambda'} h^*$.
Indeed, $\lambda'(h)f = \lambda'(w)^{1/2}f$ for each $f \in C_c(H)$ where $\lambda'(h)$ and $\lambda'(w)$ denote the left regular representations of $h$ and $w$ on $L^2(H_0)$. Similarly for each $\alpha$, there is some $h_\alpha \in L^2(H_0)$ so that $w_\alpha = h_\alpha  \circ_{\lambda'} h_\alpha $  and hence  $\lambda'(h_\alpha)f= \lambda'(w_\alpha)^{1/2}f$ for $f\in C_c(H)$.
 
 For each $f \in C_c(G)$, note that 
\begin{eqnarray*}
\norm{(w_\alpha - w) \circ_{\lambda'} f}_2 &=& \left\| \int_H (w_\alpha(y) - w(y)) L_yf d\lambda'(y)\right\|\\
&\leq& \left( \int_H |w_\alpha(y) - w(y)|d\lambda'(y)\right) \norm{f}_2\\
&\leq& \lambda'(\supp(w)) \norm{w_\alpha - w}_\infty \norm{f}_2. 
\end{eqnarray*}
Therefore, $\norm{\lambda'(w_\alpha) - \lambda'(w)}\rightarrow 0$ and hence $c= \sup_\alpha \norm{\lambda'(w_\alpha)}<\infty$.  
 By approximating the function $t \mapsto \sqrt{t}$ with polynomials uniformly on $[0,c]$, we have
 $\norm{ \lambda'(w_\alpha)^{1/2} - \lambda'(w)^{1/2} } \rightarrow 0$. 
Consequently, 
\[
\lim_\alpha \norm{(h_\alpha - h)* f}_2 = \lim_\alpha\norm{   (\lambda'(w_\alpha)^{1/2} - \lambda'(w)^{1/2})f }_2 \rightarrow 0
\]
for every $f \in C_c(H)$. Consequently, 
\[
\lim_\alpha \langle h_\alpha - h, f*g\rangle = \lim_\alpha \langle ( h_\alpha - h)* f^*, g\rangle = 0
\]
for all $f,g \in C_c(H)$. Since $C_c(H) \circ_{\lambda'} C_c(H)$ is dense in $L^2(H_0)$ and since $\sup_\alpha \|h_\alpha\|_2^2 = \sup_\alpha w_\alpha(e) < \infty $, it follows that 
$\lim_\alpha \langle h_\alpha, f\rangle = \langle h, f\rangle$  for all $f \in L^2(H_0)$.
 Since $\lim_\alpha \norm{h_\alpha}^2_2 = \lim_\alpha w_\alpha(e) = w(e) = \norm{h}_2^2$, one concludes that 
 \[
 \lim_\alpha \norm{w_\alpha - w}_2^2 = 2 \norm{h}_2^2 - 2 \lim_\alpha \text{Re} \langle h_\alpha, h\rangle = 0.
 \]
Therefore, we have
\begin{eqnarray*}
\norm{u_\alpha w - w}_{A(H_0)} &=& \norm{w_\alpha - w}_{A(H_0)}\\
 &=& \norm{h_\alpha \circ_{\lambda'} h_\alpha^* - h \circ_{\lambda'} h^*}_{A(H_0)}\\
&=& \norm{ h_\alpha \circ_{\lambda'} (h_\alpha^* - h^*) + (h_\alpha - h) \circ_{\lambda'} h_\alpha^*}_{A(H_0)}\\
&\leq& \norm{h_\alpha - h}_2 (\norm{h_\alpha}_2 + \norm{h}_2) \rightarrow 0.
\end{eqnarray*} 
 
 \vskip1.0em
 
Note that by \cite[Remark~3.12]{ac2}, in fact we have 
\[
\lim_\alpha\norm{u_\alpha w   - w}_{MA(H)} = 0 \quad \quad (w \in P_\lambda(H) \cap C_c(H)).
\]
Also note that $(u_\alpha)$ is a net in $A(H_0) \cap C_c(H)$. But this set is equal to $A(H) \cap C_c(H)$, by \cite[Remark~3.9 (2)]{ac2}. 
Let $\mathcal{A}$ be the linear span of $P_\lambda(H) \cap C_c(H)$, which is a dense subspace of $A(H)$.
Therefore, $(u_\alpha)$ is an approximate identity for $(\mathcal{A}, \norm{\cdot}_{MA(H)})$ whose $\norm{\cdot}_{MA(H)}$-norm is bounded by $1$. 
Since $\mathcal{A}$ is dense in $A(H)$ and $\norm{\cdot}_{MA(H)}\leq \norm{\cdot}_{A(H)}$. We have that  $(u_\alpha)$ is a bounded  approximate identity of $(A(H), \norm{\cdot}_{MA(H)})$.

Now let $u \in A(G) \cap C_c(H)$ be an arbitrary element. By \cite[Lemma~3.4]{ma}, there exists an element $u' \in A(G) \cap C_c(G)$ so that $u'u=u$. For $\epsilon>0$, chose $\alpha_0$ so that for every $\alpha \succeq \alpha_0$, $\norm{uu_\alpha - u}_{MA(H)} < \epsilon / \norm{u'}_{A(H)}$. 
Therefore,
\[
\norm{ uu_\alpha - u}_{A(H)} = \norm{u'(u u _\alpha - u)}_{A(H)} \leq \norm{u'}_{A(H)} \norm{uu_\alpha - u}_{MA(H)} < \epsilon.
\]
But since $C_c(H) \cap A(H)$ is dense in $A(H)$ and $(u_\alpha)_\alpha$ is a bounded net in $\norm{\cdot}_{MA(H)}$, one can easily show that $(u_\alpha)_\alpha$ is indeed an approximate identity for $A(H)$. 
\end{proof}

The following corollary is an application of Theorem~\ref{t:P-voit} to Example~\ref{eg:Jacobi-hypergroups}. 

\begin{cor}\label{c:Jacobi-WA}
Let $H_{\alpha, \beta}$ be the Jacobi hypergroup admitted by parameters $\alpha \geq \beta \geq 1/2$. Then $H_{\alpha, \beta}$ is weakly amenable with constant $1$.
\end{cor}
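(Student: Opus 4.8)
The plan is to obtain this as an immediate consequence of Theorem~\ref{t:P-voit}, so that the only work required is to verify that the Jacobi hypergroup $H_{\alpha,\beta}$ satisfies the two hypotheses of that theorem: that it is commutative and that it is a P$_\lambda$-hypergroup. Both of these have essentially been arranged earlier in the paper, so this corollary is a matter of assembling the pieces rather than proving anything substantially new.

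First I would recall that the Jacobi hypergroups $H_{\alpha,\beta}$ are, by their very construction on $\R^+$ via Jacobi functions, commutative hypergroups; this is part of their standard structure theory (and is implicit in the discussion of Section~\ref{s:P-l}), so it needs no separate argument here. Next, the P$_\lambda$-property has already been established in Example~\ref{eg:Jacobi-hypergroups}: invoking the result of \cite[Section~4]{jac} that $\phi\psi \in P_\lambda(H)$ for every pair $\phi,\psi$ in the support of the Plancherel measure, Proposition~\ref{p:P-lambda-characters} yields that $H_{\alpha,\beta}$ is a P$_\lambda$-hypergroup.

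With both hypotheses in hand, I would then simply apply Theorem~\ref{t:P-voit} to the commutative P$_\lambda$-hypergroup $H_{\alpha,\beta}$ to conclude that it is weakly amenable with Cowling--Haagerup constant $1$, and this for all admissible parameters $\alpha,\beta$. I do not anticipate any genuine obstacle: the entire analytic content has been front-loaded into Theorem~\ref{t:P-voit} and Example~\ref{eg:Jacobi-hypergroups}. The one point worth a word of care is that the conclusion is \emph{uniform} in the parameters --- the constant $1$ does not vary with $\alpha,\beta$ --- since this uniformity is precisely what answers Eymard's question. But this is automatic, because Theorem~\ref{t:P-voit} produces the constant $1$ for \emph{every} commutative P$_\lambda$-hypergroup with no reference to a parameter, so no parameter-dependent estimate ever enters.
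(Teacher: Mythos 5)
Your proposal is correct and matches the paper exactly: the paper derives this corollary as an immediate application of Theorem~\ref{t:P-voit} to Example~\ref{eg:Jacobi-hypergroups}, which is precisely your assembly of commutativity, the positivity result of \cite[Section~4]{jac}, and Proposition~\ref{p:P-lambda-characters}. Your added remark that the constant $1$ is uniform in $\alpha,\beta$ (hence answering Eymard's question) is also the point the paper makes in its introduction.
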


Note that in the proof of Theorem~\ref{t:P-voit}, we used commutativity of $H$ to have the parallel hypergroup $H_0$ which satisfies $(P_2)$. In fact if $H$ is not a necessarily commutative hypergroup which satisfies $(P_2)$, one can rewrite the proof again by replacing all $H_0$ with $H$. The modified proof would prove   one side of the Leptin theorem for P$_\lambda$-hypergroups which is presented as the following proposition. A similar result was formerly proved for discrete hypergroups in \cite{ma5}.  
 
\begin{proposition}\label{p:Leptin-P-hypergroup}
Let $H$ be a  P$_\lambda$-hypergroup. If $H$ satisfies $(P_2)$, then $A(H)$ has a bounded approximate identity.
\end{proposition}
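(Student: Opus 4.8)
The plan is to transcribe the proof of Theorem~\ref{t:P-voit} almost verbatim, replacing the auxiliary hypergroup $H_0$ by $H$ throughout. In the commutative case the passage to $H_0$ was forced on us only because $H$ itself need not satisfy $(P_2)$, and Voit's construction supplied a parallel hypergroup $H_0$ that does; here $(P_2)$ is assumed for $H$ outright, so that detour disappears and every occurrence of $\lambda'$, $\circ_{\lambda'}$, $A(H_0)$, $L^2(H_0)$ in the earlier argument is read as $\lambda$, $\cdot_\lambda$, $A(H)$, $L^2(H)$. (Recall that a P$_\lambda$-hypergroup is a regular Fourier hypergroup, so $A(H)$ is a Banach algebra and $\norm{\cdot}_{MA(H)}\le\norm{\cdot}_{A(H)}$.)

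First I would invoke $(P_2)$ via \cite[Lemma~4.4]{sk} to obtain a net $(u_\alpha)$ in $P_\lambda(H)\cap C_c(H)$ with $u_\alpha\to 1$ uniformly on compact sets and $\norm{u_\alpha}_{A(H)}=u_\alpha(e)=1$. Fixing $w\in P_\lambda(H)\cap C_c(H)$ and setting $w_\alpha=u_\alpha w$, the decisive point is that the P$_\lambda$ hypothesis --- closure of $P_\lambda(H)$ under pointwise products --- forces each $w_\alpha$ back into $P_\lambda(H)\cap C_c(H)$ with $\supp(w_\alpha)\subseteq\supp(w)$. This is precisely the structural input that replaces the multiplicativity of characters used in the commutative proof. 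Uniform convergence on the compact set $\supp(w)$ then gives $\norm{w_\alpha-w}_\infty\to 0$.

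Next I would run the positive-form argument unchanged. As compactly supported reduced positive definite functions, $w$ and each $w_\alpha$ are positive forms on $C^*_\lambda(H)$, so there are $h,h_\alpha\in L^2(H)$ with $w=h\cdot_\lambda h^*$, $w_\alpha=h_\alpha\cdot_\lambda h_\alpha^*$, determined by $\lambda(h)f=\lambda(w)^{1/2}f$ and $\lambda(h_\alpha)f=\lambda(w_\alpha)^{1/2}f$ on $C_c(H)$. The estimate
\[
\norm{(w_\alpha-w)\cdot_\lambda f}_2\le \lambda(\supp(w))\,\norm{w_\alpha-w}_\infty\,\norm{f}_2
\]
yields $\norm{\lambda(w_\alpha)-\lambda(w)}\to 0$ and a uniform bound $c=\sup_\alpha\norm{\lambda(w_\alpha)}<\infty$; approximating $t\mapsto\sqrt t$ by polynomials on $[0,c]$ upgrades this to $\norm{\lambda(w_\alpha)^{1/2}-\lambda(w)^{1/2}}\to 0$. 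Using the density of $C_c(H)\cdot_\lambda C_c(H)$ in $L^2(H)$ together with $\norm{h_\alpha}_2^2=w_\alpha(e)\to w(e)=\norm{h}_2^2$, one deduces $\norm{w_\alpha-w}_2\to 0$ and hence
\[
\norm{u_\alpha w-w}_{A(H)}\le\norm{h_\alpha-h}_2\bigl(\norm{h_\alpha}_2+\norm{h}_2\bigr)\to 0.
\]

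To conclude, let $\mathcal{A}$ be the linear span of $P_\lambda(H)\cap C_c(H)$, a dense subspace of $A(H)$; by linearity the previous step gives $\norm{u_\alpha w-w}_{A(H)}\to 0$ for every $w\in\mathcal{A}$. Since $\norm{u_\alpha}_{MA(H)}\le\norm{u_\alpha}_{A(H)}=1$, multiplication by $u_\alpha$ is uniformly contractive on $A(H)$, so for arbitrary $a\in A(H)$ and $\epsilon>0$ one picks $w\in\mathcal{A}$ with $\norm{a-w}_{A(H)}<\epsilon/3$ and closes the gap with the usual three-term estimate, exhibiting $(u_\alpha)$ as a bounded approximate identity for $A(H)$. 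I do not anticipate a genuine obstacle, as the proof is a faithful copy of Theorem~\ref{t:P-voit}; the one point needing care is that the positive-form representation $w=h\cdot_\lambda h^*$ and the norm-convergence of the square roots $\lambda(w_\alpha)^{1/2}$ survive the loss of commutativity, and both depend only on $w,w_\alpha$ lying in $P_\lambda(H)$, which the P$_\lambda$ hypothesis supplies.
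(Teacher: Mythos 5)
Your proposal is correct and is essentially the paper's own proof: the paper proves this proposition precisely by remarking that one can rerun the proof of Theorem~\ref{t:P-voit} with $H_0$ replaced by $H$, using $(P_2)$ for $H$ directly via \cite[Lemma~4.4]{sk} and the P$_\lambda$ hypothesis to keep $u_\alpha w$ in $P_\lambda(H)\cap C_c(H)$, which is exactly what you do. Your only deviation is a simplification the substitution forces anyway --- since $A(H_0)$ becomes $A(H)$, the detour through $\norm{\cdot}_{MA(H)}$ and the $u'u=u$ trick in the original proof collapses into the direct three-term estimate with $\norm{u_\alpha}_{MA(H)}\leq\norm{u_\alpha}_{A(H)}=1$, and you handle that correctly.
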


\section{Ultraspherical hypergroups}\label{s:ultra}

For definitions, notation, and basic results which we use in this section we refer the reader to \cite{mu2}.  Let $H$ be an ultraspherical hypergroup structure defined on  $G$.  Muruganandam, \cite{mu2}, proves the existence of a norm decreasing homorphism
\begin{equation}\label{eq:phi}
\pi: B_\lambda(G) \rightarrow B_\lambda(G)
\end{equation}
whose image is $B_\lambda(G)^{\sharp}$,  the subspace of all  radial elements in $B(G)$. Then he proves that $B_\lambda(G)^{\sharp}$ is isomorphic to the reduced Fourier-Stieltjes algebra of $B_\lambda(H)$. Further, $\pi$ maps positive definite functions into positive definite functions on $H$, through this identification.  The  restriction of $\pi$ to the Fourier algebra $A(G)$ is mapped onto $A(G)^\sharp$, the subalgebra of radial elements in $A(G)$,  which is isomorphic to $A(H)$. 
In this case, the dual of this restriction gives 
\begin{equation}\label{eq:iota}
\iota:= \left( \pi |_{A(G)^\sharp}\right)^*: VN(G)^\sharp \rightarrow VN(H)
\end{equation}
a complete isometry where $VN(G)^\sharp$ is the subalgebra of $VN(G)$ generated by radial elements in $L^1(G)$ (\cite[Theorem~3.9]{mu2}).   This subsequently  implies that   $H$ is a completely Fourier hypergroup, that is, $A(H)$ is a completely contractive Banach algebra with its canonical operator space structure. 

Since $\pi$ defined in (\ref{eq:phi}) is a weak$^*$ continuous map, its predual mapping  is 
\begin{equation}\label{eq:pi*}
\pi_*: C^*_\lambda(G) \rightarrow C^*_\lambda(G)
\end{equation}
whose image is $C^*_\lambda(G)^{\sharp}$,  the norm closure of $\lambda(f)$ for all $f \in L^1(G)$ which are radial. In fact, $\pi_*$ is a contractive projection from $C^*_\lambda(G)$ onto $C^*_\lambda(G)^\sharp$ (or equivalently $C^*_\lambda(H)$). By a known result on projections of $C^*$-algebras  (\cite[Theorem 1.5.10]{oz}), $\pi_*$ is in fact a contractive completely positive map.

The rich structure of ultraspherical hypergroups has made article \cite{ultra} possible where the spectral synthesis sets for different classes of ultraspherical hypergroups were studied and very interesting unexpected results were found.
   
\begin{proposition}\label{p:ultrasphrical-is-P}
Let $H$ be an ultraspherical hypergroup. Then $H$ is a P$_\lambda$-hypergroup. 
\end{proposition}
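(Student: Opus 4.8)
The plan is to deduce this from the classical group-level statement and to transport it across the homomorphism $\pi$ of \eqref{eq:phi}.

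First I would establish the underlying fact that, for the locally compact group $G$, the cone $P_\lambda(G)$ of reduced continuous positive definite functions is closed under pointwise multiplication. Each $f\in P_\lambda(G)$ is a diagonal matrix coefficient $\langle \sigma_f(\cdot)\xi,\xi\rangle$ of a unitary representation $\sigma_f$ weakly contained in the left regular representation $\lambda_G$, and for $f,g\in P_\lambda(G)$ the pointwise product $fg$ is the diagonal coefficient $\langle(\sigma_f\otimes\sigma_g)(\cdot)(\xi\otimes\eta),\xi\otimes\eta\rangle$ of the tensor product representation. Since $\sigma_f\prec\lambda_G$, tensoring with the fixed representation $\sigma_g$ preserves weak containment, and Fell's absorption principle $\lambda_G\otimes\sigma_g\cong\lambda_G\otimes 1$ exhibits $\lambda_G\otimes\sigma_g$ as a multiple of $\lambda_G$; hence $\sigma_f\otimes\sigma_g\prec\lambda_G\otimes\sigma_g\prec\lambda_G$, so $fg\in P_\lambda(G)$. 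In particular the product of two \emph{radial} members of $P_\lambda(G)$ is again radial and lies in $P_\lambda(G)$, i.e. $P_\lambda(G)^\sharp:=P_\lambda(G)\cap B_\lambda(G)^\sharp$ is closed under pointwise products.

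Next I would invoke the structure recalled above. The map $\pi$ of \eqref{eq:phi} is a contractive, pointwise-multiplicative projection of $B_\lambda(G)$ onto $B_\lambda(G)^\sharp$ which carries positive definite functions to positive definite functions on $H$, and under the identification $B_\lambda(G)^\sharp\cong B_\lambda(H)$ it matches $P_\lambda(H)$ with $P_\lambda(G)^\sharp$. Thus, given $u,v\in P_\lambda(H)$, I would take their radial representatives $f,g\in P_\lambda(G)^\sharp\subseteq P_\lambda(G)$, so that $u=\pi(f)$ and $v=\pi(g)$; the previous paragraph gives $fg\in P_\lambda(G)^\sharp$, and multiplicativity of $\pi$ yields $uv=\pi(f)\pi(g)=\pi(fg)\in P_\lambda(H)$. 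Hence $P_\lambda(H)$ is closed under pointwise multiplication and $H$ is a P$_\lambda$-hypergroup.

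The main obstacle is the reduced (as opposed to full) positive-definiteness: one must ensure the product of two reduced positive definite functions stays in $B_\lambda(G)$ and not merely in $B(G)$, which is precisely the point where Fell's absorption enters, and one must check that the identification $B_\lambda(G)^\sharp\cong B_\lambda(H)$ induced by $\pi$ preserves the positive cones in both directions, so that each $u\in P_\lambda(H)$ really admits a positive definite radial representative on $G$. Both are supplied by the established properties of $\pi$ together with the tensor-product argument, so I expect the remaining work to be routine.
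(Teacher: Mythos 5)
Your overall strategy --- transport to the group $G$, use that $P_\lambda(G)$ is closed under pointwise products, and transport back --- is the same as the paper's, and your Fell-absorption proof of the group-level fact is correct (the paper simply asserts it, in the form ``since $G$ is a locally compact group, it is a P$_\lambda$-hypergroup''). The problem is the step you yourself flag as the ``main obstacle'' and then dismiss as routine: the claim that the identification $B_\lambda(G)^\sharp \cong B_\lambda(H)$ matches $P_\lambda(H)$ with $P_\lambda(G)^\sharp$ \emph{in both directions}. The recalled properties of $\pi$ supply only one direction: $\pi$ carries (reduced) positive definite functions on $G$ to positive definite functions on $H$; this is \cite[Theorem~3.3]{mu2}, which is what lets you return from $G$ to $H$ at the end. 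They do \emph{not} supply the direction your argument needs at the start, namely that every $u \in P_\lambda(H)$ --- a functional which is a priori positive only on $C^*_\lambda(H) \cong C^*_\lambda(G)^\sharp$ --- is positive on all of $C^*_\lambda(G)$, i.e.\ that its radial representative actually lies in $P_\lambda(G)$. Without this, you cannot feed $u,v \in P_\lambda(H)$ into the group-level multiplicativity at all, and your tensor-product argument has no bearing on the point, since it lives entirely on the group side.

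This reverse inclusion is precisely what the bulk of the paper's proof establishes, and it is not routine: one passes to the predual map $\pi_*\colon C^*_\lambda(G) \to C^*_\lambda(G)$ of $\pi$, which is a contractive projection onto the $C^*$-subalgebra $C^*_\lambda(G)^\sharp$, hence automatically completely positive by \cite[Theorem~1.5.10]{oz}; then for radial $u$ and $f \in C^*_\lambda(G)^+$ one gets
\[
\langle u, f\rangle = \langle u, \pi_*(f)\rangle \geq 0,
\]
because $\pi_*(f)$ is a positive element of $C^*_\lambda(G)^\sharp \cong C^*_\lambda(H)$ and $u$ is a positive functional there. You need to insert this positivity-transfer argument (or some substitute, e.g.\ an argument via the norm characterization of positive functionals under an isometric identification) for the proof to close. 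With that step added, your proof becomes essentially the paper's, with the minor bonus that you make explicit the classical group fact the paper leaves as an assertion.
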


\begin{proof}
Let $u$ be  chosen arbitrarily in $P_\lambda(H)$. Note that $P_\lambda(H) \subseteq B_\lambda(H)$. Through the identification, $B_\lambda(G)^\sharp = B(H)$,    let $u = \pi(u)$ be  identified with an element  $B_\lambda(G)^\sharp$.
For each $f \in C^*_\lambda(G)^+$,  $\pi_*(f)$ is a positive  element in $C^*_\lambda(G)^\sharp$ since $\pi_*$ is a complete positive map. Subsequently, 
\[
\langle u , f\rangle = \langle u, \pi_*(f)\rangle \geq 0.
\]
This implies that $u$ belongs to $P_\lambda(G)$. Now let $u_1, u_2$ be in $P_\lambda(H)$, again identified as two radial  elements on $G$.  Since $G$ is a locally compact group, it is a P$_\lambda$-hypergroup. Therefore, $u_1 u_2 \in P_\lambda(G)$  is another radial element on $G$; by \cite[Theorem~3.3]{mu2},   there exists  $u \in P_\lambda(H)$ so that $u$ is identified by $u_1 u_2$. 
\end{proof}

The following corollary is an immediate consequence of Proposition~\ref{p:ultrasphrical-is-P} and Theorem~\ref{t:P-voit}.   Recall that a \emph{Gelfand pair} is a pair $G,K$ consisting of a locally compact group $G$ and a compact subgroup $K$ such that the subalgebra of $(K,K)$-double coset invariant subalgebra of $L^1(G)$ is  commutative.  The double coset structure of a Gelfand pair forms a commutative ultraspherical hypergroup.  
 
 \begin{cor}\label{c:WA-ultraspherical}
 Let $H$ be a commutative ultraspherical hypergroup. Then $H$ is  weakly amenable with constant $1$. In particular every Gelfand pair admits a weakly amenable hypergroup with constant $1$.  
 \end{cor}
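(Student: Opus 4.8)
The plan is to deduce the statement directly by assembling the two structural results just established. First I would observe that a commutative ultraspherical hypergroup $H$ falls under the hypotheses of Proposition~\ref{p:ultrasphrical-is-P}, so that $H$ is a P$_\lambda$-hypergroup; that is, $P_\lambda(H)$ is closed under pointwise multiplication. The content of that proposition is exactly what transports the group-level fact that $G$ is a P$_\lambda$-hypergroup (every locally compact group is) across the homomorphism $\pi$ and its completely positive predual $\pi_*$ onto the radial subalgebra identified with $H$, so nothing further needs to be checked at this stage.

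Having secured the P$_\lambda$-property, the second step is simply to invoke Theorem~\ref{t:P-voit}. Since $H$ is assumed commutative, the hypotheses of that theorem are met, and it produces an approximate identity for $A(H)$ whose $M_{cb}A(H)$-norm is bounded by $1$. This is precisely the assertion that $H$ is weakly amenable with Cowling--Haagerup constant $1$. The commutativity hypothesis is indispensable here: it is what allows one to pass to the Voit companion hypergroup $H_0$ satisfying $(P_2)$ inside the proof of Theorem~\ref{t:P-voit}, and that construction has no counterpart in the noncommutative setting.

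For the final sentence concerning Gelfand pairs, I would appeal to the structural fact recalled immediately before the statement: the double coset space of a Gelfand pair $(G,K)$ carries the structure of a commutative ultraspherical hypergroup. Applying the first half of the corollary to this hypergroup then yields weak amenability with constant $1$, which completes the argument.

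The point to get right is not really an analytic obstacle but a bookkeeping one, namely ensuring that the commutativity hypothesis genuinely holds in each application, since Theorem~\ref{t:P-voit} is stated only for commutative P$_\lambda$-hypergroups. Beyond verifying this, the corollary is an immediate consequence of Proposition~\ref{p:ultrasphrical-is-P} and Theorem~\ref{t:P-voit}, requiring no additional estimates.
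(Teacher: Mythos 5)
Your proposal is correct and follows exactly the paper's own route: the paper derives this corollary as an immediate consequence of Proposition~\ref{p:ultrasphrical-is-P} (which makes $H$ a P$_\lambda$-hypergroup) and Theorem~\ref{t:P-voit} (applied using commutativity), with the Gelfand pair case handled by the fact, recalled just before the statement, that the double coset space of a Gelfand pair is a commutative ultraspherical hypergroup. Nothing is missing; your attention to where commutativity is used matches the paper's implicit reasoning.
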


We finish this paper by  a complete characterization of the existence of  bounded approximate identities for $A(H)$ for an ultraspherical hypergroup $H$. To prove this characterization we need  the following lemma.

\begin{lem}\label{l:G-amenable=>H-P2}
Let $H$ be an ultraspherical hypergroup on a locally compact group $G$. If $G$ is amenable then $H$ satisfies $(P_2)$.
\end{lem}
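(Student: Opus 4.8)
The plan is to transport a Reiter net from the amenable group $G$ across the radialization map $\pi$ of \eqref{eq:phi} and thereby verify $(P_2)$ for $H$ directly, by producing a Reiter net in $L^2(H)$. First I would record the classical fact that, $G$ being amenable, $A(G)$ admits an approximate identity $(u_\alpha)\subseteq A(G)\cap C_c(G)$ consisting of normalized continuous positive definite functions with $u_\alpha(e)=\norm{u_\alpha}_{A(G)}=1$ and $u_\alpha\to 1$ uniformly on compact subsets of $G$; this is Leptin's theorem, equivalently Reiter's $(P_2)$ condition for the group $G$. I then set $v_\alpha:=\pi(u_\alpha)$ and use that $\pi$ is norm-decreasing and carries positive definite functions to positive definite functions, so that under the identification $B_\lambda(G)^\sharp\cong B_\lambda(H)$ each $v_\alpha$ lies in $P_\lambda(H)\cap A(H)$ with $\norm{v_\alpha}_{A(H)}\leq 1$.

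The crucial structural input is that $\pi$, realized on functions, is a radial averaging against the spherical probability measures of the ultraspherical structure (\cite{mu2}). Consequently $\pi(1)=1$, $\pi$ preserves the value at the identity (whence $v_\alpha(e)=u_\alpha(e)=1$), and $\pi$ is an $L^\infty$-contraction compatible with the proper support map of the averaging: if a function is uniformly small on the spherical saturation of a compact set $K\subseteq H$, then its $\pi$-image is uniformly small on $K$. From this last property together with $u_\alpha\to 1$ uniformly on compacts of $G$ I would deduce $v_\alpha\to 1$ uniformly on compacts of $H$. Combined with $v_\alpha(e)=1$ and the fact that the $A(H)$-norm of a positive definite element equals its value at $e$, this forces $\norm{v_\alpha}_{A(H)}=1$.

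Finally, mimicking the $w=h\circ_{\lambda'}h^*$ computation in the proof of Theorem~\ref{t:P-voit}: since each $v_\alpha\in P_\lambda(H)\cap A(H)$ is a positive form on $C^*_\lambda(H)$ with $v_\alpha(e)=1$, there is $h_\alpha\in L^2(H)$ with $v_\alpha=h_\alpha\cdot_\lambda h_\alpha^*$, $\norm{h_\alpha}_2^2=v_\alpha(e)=1$, and $v_\alpha(x)=\langle \lambda(x)h_\alpha,h_\alpha\rangle$ under the usual conventions. Then
\[
\norm{\lambda(x)h_\alpha-h_\alpha}_2^2=2\norm{h_\alpha}_2^2-2\operatorname{Re}v_\alpha(x)=2-2\operatorname{Re}v_\alpha(x),
\]
which tends to $0$ uniformly for $x$ in any compact subset of $H$, because $v_\alpha\to 1$ uniformly on compacts. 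Thus $(h_\alpha)$ is a Reiter net in $L^2(H)$ and $H$ satisfies $(P_2)$, cf.~\cite{sk}.

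I expect the main obstacle to be the middle step: pinning down $\pi$ concretely enough as a radial-averaging operator to justify both $v_\alpha(e)=1$ and the transfer of uniform-on-compacts convergence, since $\pi$ is given abstractly on $B_\lambda(G)$ rather than as an explicit integral. An alternative that sidesteps the value-at-$e$ bookkeeping is to start from a nonnegative Reiter net $(\xi_\alpha)$ in $L^2(G)$ and radialize it by the orthogonal projection $L^2(G)\to L^2(G)^\sharp\cong L^2(H)$; there the potential difficulty shifts to showing the radialized vectors do not degenerate to $0$, which follows once one checks the projection is an average of group translations and hence nearly fixes the asymptotically invariant $\xi_\alpha$.
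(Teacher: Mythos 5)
Your construction is the one the paper uses: take a net $(u_\alpha)\subseteq P_\lambda(G)\cap C_c(G)$ with $u_\alpha\to 1$ uniformly on compacta of the amenable group $G$, push it forward by $v_\alpha:=\pi(u_\alpha)$, and transfer the uniform convergence to a compact $K\subseteq H$ by exactly the mechanism you describe: $\pi^*(\dot x)$ is a probability measure carried by the orbit $O_x$, each $O_x\subseteq q^{-1}(K)$, and $q^{-1}(K)$ is compact in $G$, so $|v_\alpha(\dot x)-1|\le\int_{O_x}|u_\alpha-1|\,d\pi^*(\dot x)<\epsilon$ eventually. So the step you flagged as the main obstacle is not in fact a problem; it is precisely what the paper's proof writes out, using that $\pi^*$ is norm-decreasing on $M(G)$. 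One small correction there: $\pi$ does \emph{not} preserve the value at the identity --- $v_\alpha(\dot e)$ is the average of $u_\alpha$ over the orbit $O_e$, not $u_\alpha(e)$ --- but this is harmless, since uniform convergence on compacta gives $v_\alpha(\dot e)\to 1$ and one may normalize.

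Where you genuinely diverge from the paper is the final step, and that is where the gap sits. The paper does not build a Reiter net by hand; it invokes \cite[Lemma~4.4]{sk}, which converts a net in $P(H)\cap C_c(H)$ tending to $1$ uniformly on compacta into the $(P_2)$ property. Your hand-rolled replacement has two hypergroup-specific defects. First, the identity $\norm{\lambda(x)h_\alpha-h_\alpha}_2^2=2-2\operatorname{Re}v_\alpha(x)$ presumes $\lambda(x)$ is an isometry of $L^2(H)$; for hypergroups $\lambda(x)$ is only a contraction, so only the inequality $\norm{\lambda(x)h_\alpha-h_\alpha}_2^2\le 2\norm{h_\alpha}_2^2-2\operatorname{Re}\langle\lambda(x)h_\alpha,h_\alpha\rangle$ holds --- this is a minor slip, since the inequality suffices. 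Second, and more seriously, $(P_2)$ in \cite{sk} requires \emph{nonnegative} norm-one functions, and your GNS vectors $h_\alpha$ have no reason to be nonnegative. For groups one passes to $|h_\alpha|$ for free because translation commutes with the modulus; for hypergroups it does not, since $\lambda(x)$ acts by averaging and one only has $\lambda(x)|h|\ge|\lambda(x)h|$ pointwise. The repair needs an extra computation, e.g.\ $\langle\lambda(x)|h_\alpha|,|h_\alpha|\rangle\ge\left|\langle\lambda(x)h_\alpha,h_\alpha\rangle\right|$ combined with contractivity of $\lambda(x)$ yields $\norm{\lambda(x)|h_\alpha|-|h_\alpha|}_2^2\le 2-2\left|\langle\lambda(x)h_\alpha,h_\alpha\rangle\right|\to 0$ uniformly on compacta. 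Notice that this is exactly the kind of nondegeneracy issue you yourself raise about your alternative $L^2$-projection route; it is present in your main route too. Either supply this computation or, as the paper does, simply cite \cite[Lemma~4.4]{sk}.
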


\begin{proof}
Let $G$ be amenable. Then there is a net $(u_\alpha)_\alpha$ of positive definite compactly supported elements of $P_\lambda(G)$ so that $u_\alpha \rightarrow 1$ uniformly on compact sets of $G$. Let $v_\alpha:=\pi(u_\alpha)$ for each $\alpha$ where $\pi$ is defined in (\ref{eq:phi}). Let us identify $B_\lambda(G)^{\sharp}$ with $B_\lambda(H)$. Then $(v_\alpha)$ forms a net in $P(H) \cap C_c(H)$, by \cite[Theorem~3.3]{mu2}. It is known that $P(H) \cap C_c(H) \subseteq P_\lambda(H) \cap A(H) $, \cite[Corollary~2.12]{mu1}. Therefore, there are nets $(\xi_\alpha)_\alpha$ in $L^2(H)$ with $\norm{\xi_\alpha}_2 =1$ so that $v_\alpha=\xi_\alpha \cdot_\lambda \tilde{\xi}_\alpha$ for each $\alpha$. 

Let $H = \{ \dot{x} : x\in G \}$ denote the ultraspherical hypergroup obtained in \cite[Theorem 2.12]{mu2}. Let $q$ denote the quotient map from $G$ onto $H$.
Let   $K \subseteq H$ be compact  and $\epsilon >0$. We know that $q^{-1}(K) \subseteq G$ is a compact subset of $G$.  So there is one $\alpha_0$ so that for each $\alpha \succeq \alpha_0$, $|u_\alpha(y) - 1| < \epsilon$ for all $y \in q^{-1}(K)$.  Note that in particular, for each $x\in K$, $O_x \subseteq q^{-1}(K)$ where $O_x$ denotes the orbit of $x$ in $G$. 

Let $\pi$ denote the regular averaging  $C_0(G) \rightarrow C_0(G)$ into radial functions on $G$ (look at \cite[Definition~2.6]{mu2}). Now we have
\begin{eqnarray*}
| \pi(u_\alpha)(\dot{x}) - 1| &=& |\langle u_\alpha - 1, \pi^*(\dot{x})\rangle |  \leq \int_{O_x} |u_\alpha(y)- 1| d \pi^*(\dot{x})(y) < \epsilon.
\end{eqnarray*}
Note that the last inequality holds as $\pi^*$ is a norm-decreasing map from $M(G) \rightarrow M(G)$.  Therefore, $(v_\alpha)$ satisfies the conditions of \cite[Lemma~4.4]{sk} and therefore, $H$ satisfies $(P_2)$.
\end{proof}

\begin{theorem}\label{t:Leptin-for-ultraspherical}
Let $H$ be an ultraspherical hypergroup on a locally compact group $G$. Then the following conditions are equivalent.
\begin{itemize}
\item[(i)]{$H$ satisfies $(P_2)$.}
\item[(ii)]{$A(H)$ has a bounded approximate identity.}
\item[(iii)]{$A(G)$ has a  $\pi$-radial bounded approximate identity.}
\item[(iv)]{$G$ is amenable.}
\end{itemize}
In this case, $MA(H)=M_{cb}A(H)=B_\lambda(H)$.
\end{theorem}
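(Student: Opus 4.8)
The plan is to prove the cycle of equivalences (i) $\Leftrightarrow$ (ii) $\Leftrightarrow$ (iii) $\Leftrightarrow$ (iv), and then to derive the final identity on multipliers once we know $A(H)$ has a bounded approximate identity. The backbone is to transport everything through the maps $\pi$ and $\iota$ from \eqref{eq:phi} and \eqref{eq:iota}, which identify $A(H)$ with the radial subalgebra $A(G)^\sharp$ and $VN(H)$ with $VN(G)^\sharp$ completely isometrically.

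First I would establish (iv) $\Rightarrow$ (i), which is exactly Lemma~\ref{l:G-amenable=>H-P2}, so that direction is free. For (i) $\Rightarrow$ (ii), I would invoke Proposition~\ref{p:ultrasphrical-is-P}, which tells us $H$ is a P$_\lambda$-hypergroup, and then apply Proposition~\ref{p:Leptin-P-hypergroup}: a P$_\lambda$-hypergroup satisfying $(P_2)$ has $A(H)$ with a bounded approximate identity. So (i) $\Rightarrow$ (ii) is also essentially packaged already. The implication (ii) $\Rightarrow$ (iii) should come from pushing a bounded approximate identity $(e_\alpha)$ of $A(H)$ back through the isometric isomorphism $A(H) \cong A(G)^\sharp$; since $\pi$ restricts to an algebra isomorphism onto the radial subalgebra, the images $\pi^{-1}(e_\alpha)$ form a net of radial elements that act as an approximate identity on the radial part of $A(G)$, i.e.\ a $\pi$-radial bounded approximate identity. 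The reverse passage gives (iii) $\Rightarrow$ (ii) by the same transport of structure.

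The crucial and least automatic link is (iii) $\Rightarrow$ (iv), that a radial bounded approximate identity forces amenability of $G$. Here I would argue as follows: a $\pi$-radial bounded approximate identity $(v_\alpha)$ in $A(G)$ can be averaged or used to produce, via the completely positive projection $\pi_*$ from \eqref{eq:pi*} and the positivity-preservation of $\pi$, a net of positive-definite functions on $G$ tending to $1$ uniformly on compacta with uniformly bounded $A(G)$-norm. By Leptin's theorem for locally compact groups, the existence of a bounded approximate identity in $A(G)$ is equivalent to amenability of $G$, so it suffices to manufacture from the radial approximate identity an honest (not necessarily radial) bounded approximate identity for $A(G)$, or directly a Reiter net. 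I expect this to be the main obstacle: one must check that radiality does not obstruct the passage to the full $A(G)$, which should follow because the orbit-averaging map $\pi$ is norm-decreasing and surjective onto $A(G)^\sharp$, so a uniform-on-compacta approximation of the constant function $1$ (which is itself radial) by radial positive-definite functions already certifies the Leptin condition on $G$.

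Finally, for the concluding identity $MA(H)=M_{cb}A(H)=B_\lambda(H)$, I would use that once $A(H)$ has a bounded approximate identity, every completely bounded multiplier is represented by an element of $B_\lambda(H)$ via the standard duality $C^*_\lambda(H)^* = B_\lambda(H)$, together with the fact (recorded in the Notation section) that for a completely Fourier hypergroup the multiplier algebra $MA(H)$ and $M_{cb}A(H)$ coincide; the bounded approximate identity upgrades the inclusion $B_\lambda(H) \subseteq M_{cb}A(H)$ to an equality by the usual weak$^*$-limit argument, extending a multiplier to a functional on $C^*_\lambda(H)$ and identifying it with an element of the dual. The main subtlety to watch is ensuring the approximate identity is bounded in the $M_{cb}A(H)$ norm, which is guaranteed since $\|\cdot\|_{M_{cb}A(H)} \leq \|\cdot\|_{A(H)}$.
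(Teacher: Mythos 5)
Your steps (iv)~$\Rightarrow$~(i) and (i)~$\Rightarrow$~(ii) coincide with the paper's (both are quoted from Lemma~\ref{l:G-amenable=>H-P2}, Proposition~\ref{p:ultrasphrical-is-P} and Proposition~\ref{p:Leptin-P-hypergroup}), but there is a genuine gap at (ii)~$\Rightarrow$~(iii). You read ``$\pi$-radial bounded approximate identity'' as a bounded net of radial elements acting as an approximate identity \emph{on the radial part} $A(G)^\sharp$, and you prove it by transporting a bounded approximate identity of $A(H)$ through the isomorphism $A(H)\cong A(G)^\sharp$. But condition (iii) asserts a bounded approximate identity for the \emph{whole} Fourier algebra $A(G)$ consisting of radial elements --- this is precisely why the paper can dispose of (iii)~$\Rightarrow$~(iv) in one line via the classical Leptin theorem. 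Your transported net has no reason to act as an approximate identity on non-radial elements of $A(G)$: a net of radial functions that approximately multiplies radial functions need not approximately multiply anything outside the proper subalgebra $A(G)^\sharp$, and no averaging or module property of $\pi$ supplies this for free. The paper closes exactly this gap with a different device: since $A(H)$ is regular and semisimple and has a bounded approximate identity bounded by $c$, \cite[Lemma~2]{ma4} yields, for each compact $K\subseteq H$, an element $u_K\in A(H)$ with $\norm{u_K}_{A(H)}\leq c+1$ and $u_K\equiv 1$ on $K$. Its pullback $v_K\in A(G)^\sharp$ is then identically $1$ on the \emph{saturated} compact set $q^{-1}(K)$, so for every $v\in A(G)\cap C_c(G)$, radial or not, one has $v\, v_{K_0}=v$ exactly, where $K_0=q(\supp v)$; boundedness of the net $(v_K)$ and density of $A(G)\cap C_c(G)$ in $A(G)$ then give a genuine bounded approximate identity for all of $A(G)$. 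The idea you are missing is that radial functions can act as identities on non-radial functions only because they are identically $1$ on saturated compacta, not merely approximate units for the radial subalgebra. As your proposal stands, nothing in it proves that $A(G)$ itself has a radial bounded approximate identity, so the equivalence with (iii) as stated is not established.

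Two smaller points. First, under the correct reading of (iii), your (iii)~$\Rightarrow$~(iv) is immediate from classical Leptin; the Hulanicki-style argument you sketch is only needed to salvage your weaker reading, and even then the elements of a bounded approximate identity need not be positive definite, so you would have to argue via norm-boundedness in $B_\lambda(G)$, uniform convergence to $1$ on compacta, and weak$^*$ closedness of $B_\lambda(G)$ in $B(G)$ to conclude $1\in B_\lambda(G)$, rather than via positive definiteness. Second, for the final identity you appeal to the Notation section for $MA(H)=M_{cb}A(H)$, but that fact is recorded there only for \emph{commutative} regular Fourier hypergroups, and an ultraspherical hypergroup need not be commutative; the paper instead quotes \cite[Theorem~4.2]{mu2} for $MA(H)=B_\lambda(H)$ when $G$ is amenable, and \cite[Lemma~3.3]{att} (which requires the completely contractive structure of $A(H)$ together with the bounded approximate identity just obtained) for $MA(H)=M_{cb}A(H)$.
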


\begin{proof}
$(i) \Rightarrow (ii)$ is proved in Proposition~\ref{p:Leptin-P-hypergroup}.

To prove this $(ii) \Rightarrow (iii)$ we apply the facts that $A(H)$ is a regular semisimple Banach algebra which can be identified by $A(G)^\sharp$. Suppose that $A(H)$ has a bounded approximate identity norm bounded by $c>0$.
So by \cite[Lemma~2]{ma4}, for each compact set $K \subseteq H$, there is an element $u_{K}$ so that $\norm{u_{K}}_{A(H)} \leq c+1$ so that $u_{K}|_K\equiv 1$.  For each   $K$, define $v_{K}:=\iota(u_{K})$. Note that $v_{K, \epsilon}|_{q^{-1}(K)}$ is constantly one. Let $v$ be an arbitrary element in $A(G) \cap C_c(G)$, let $F= \supp(v) \subset G$. It is immediate that $F \subset K_0$ for $K_0:=q^{-1} (q (F))$. Therefore, $v v_{K_0} = v$. Let $(v_K)_{K \subset H}$ be directed reversely by the inclusion of $K$. Since $C_c(G) \cap A(G)$ is dense in $A(G)$,  $(v_K)_K$, as a bounded net in $A(G)$, it forms a bounded approximate identity for $A(G)$.

$(iii)\Rightarrow (iv)$ is based on the classical Leptin theorem for locally compact groups.

$(iv) \Rightarrow (i)$ is proved in Lemma~\ref{l:G-amenable=>H-P2}.

The equality  $MA(H)=B_\lambda(H)$ for amenable group $G$ was proved in  \cite[Theorem~4.2]{mu2}. Also, since $H$ is an operator Fourier hypergroup, we have that $A(H)$ is a completely contractive Banach algebra with a bounded approximate identity. Therefore, one can use \cite[Lemma~3.3]{att} to prove that $MA(H)=M_{cb}A(H)$. 
\end{proof}

Note that $(iv) \Rightarrow (iii)$ in Theorem~\ref{t:Leptin-for-ultraspherical} was first proved in \cite[Lemma~3.7]{ultra} using basic properties of ultrashperical hypergroups.

\section*{Acknowledgment}
The author is grateful to Jason Crann for a number of fruitful conversations on the topic of this paper. The author was partially supported by a Carleton-Fields Institute Postdoctoral Fellowship.


\vskip1.5em

\end{document}